\newtheorem{lemma}{Lemma}
\newtheorem*{theorem}{Theorem}
\DeclareMathOperator \esssupp {ess~supp}
\DeclareMathOperator \supp {supp}
\DeclareMathOperator \ellip {ell}
\renewcommand{\Re}{\operatorname{Re}}
\renewcommand{\Im}{\operatorname{Im}}
\newcommand{\norm}[1]{\| #1\|}
\title{Propagation of Singularities for the Wave Equation}
\author{Dean Baskin}
\address{Department of Mathematics, Texas A\&M University, Mailstop
  3368, College Station, TX 77843, USA}
\author{Kiril Datchev}
\address{Mathematics Department, Purdue University, West Lafayette,
  IN 47907, USA}
\begin{document}
\begin{abstract}
This expository note gives a digest version of H\"ormander's propagation of singularities theorem for the wave equation. 
\end{abstract}
\maketitle

\section{Introduction}

Let $P= \Box = - \partial_{x_0}^2 + \partial_{x_1}^2 + \cdots +
\partial_{x_n}^2$ be the flat D'Alembertian on $\mathbb R^{d}$,
$d=n+1$. We use coordinates $x=(x_0,\dots, x_n)$. H\"ormander's
propagation of singularities result for solutions to $\Box u = 0$ says
that if $u$ has a given degree of Sobolev regularity at a given
position and direction in spacetime $(x, \hat \xi) \in \mathbb R^{d}
\times \mathbb S^{d-1}$, then it must have the same Sobolev regularity
at all $(x',\hat \xi)$ such that $x'$ is on the light ray emanating from
$x$ with direction $\hat \xi$.

To make precise the notion of a given degree of Sobolev regularity at a given position and direction, we introduce the following definitions. For $m \in \mathbb R$, let $S^m = S^m(\mathbb R^{d})$ be the \textit{Kohn--Nirenberg} class of symbols, defined by the condition that $a \in S^m$ means $a \in C^\infty (\mathbb R^{d} \times \mathbb R^{d})$ and the partial derivatives of $a$ obey the bounds
\[
| \partial_x^\alpha \partial_\xi^\beta a(x,\xi)| \le C_{\alpha,\beta} \langle \xi \rangle^{m-|\beta|}.
\]

To each such symbol we associate a \textit{pseudodifferential operator} $A$ given by
\begin{equation}\label{e:pseudodef}
 A u (x) = \frac 1 {(2\pi)^{d}} \int \!\!\int e^{i(x-y)\cdot \xi} a(x,\xi) u(y) \, dy\, d\xi.
\end{equation}
The set of $A$ corresponding to some $a \in S^m$ is denoted $\Psi^m$. If $A \in \Psi^m$, then $A$ is a bounded operator from $H^k$ to $H^{k-m}$ for every $k$. For a proof of this general statement see Corollary 4.32 of \cite{hintz}, but note the following simpler cases: if $a(x,\xi) = a(x)$ is independent of $\xi$ then $A$ is the multiplier $u(x) \mapsto a(x)u(x)$, and if $a(x,\xi) = \langle \xi \rangle ^{m}$ then $\|Au\|_{L^2}$ is the usual norm on the Sobolev space $H^m$ given in terms of the Fourier transform.

To each $a \in S^m$  which is compactly supported in $x$, we associate
an \textit{essential support at fiber infinity}\footnote{One can extend this definition to symbols which need not be compactly supported in $x$: see Chapter 6 of \cite{hintz} or Section E.2.1 of \cite{dyatlovzworski}.}
(i.e. a support as the frequency $\xi$ tends to infinity), denoted by $\esssupp(a)$, and defined as follows: $(x,\hat \xi)$ is not in the essential support if and only if there is a neighborhood $U \subset \mathbb R^d \times \mathbb S^{d-1}$ of $(x,\hat \xi)$, such that the  partial derivatives  of $a$ obey the bounds
\[
| \partial_x^\alpha \partial_\xi^\beta a(x',\xi')| \le  C_{\alpha,\beta,N} \langle \xi' \rangle^{-N}
\]
for all $N$ when $(x',\xi'/|\xi'|) \in U$.

We say $a \in S^m$ is \textit{elliptic} at $(x,\hat \xi)$ if there  are positive constants $C$ and $\varepsilon$,  and a neighborhood $U \subset \mathbb R^d \times \mathbb S^{d-1}$ of $(x,\hat \xi)$, such that 
\[
 |a(x',\xi')| \ge \varepsilon \langle \xi' \rangle^m,
\]
when $(x',\xi'/|\xi'|) \in U$ and $|\xi| \ge C$.  The set of points in $\mathbb R^d \times \mathbb S^{d-1}$ at which $a$ is elliptic is denoted $\ellip(a)$.

Given symbols $b$, $e$, and $g$, we say that $b$ is \textit{controlled
  by} $e$ \textit{through} $g$ if for each point $(x,\hat \xi)$ in
$\esssupp(b)$, there is a light ray contained in $\ellip(g)$ which
starts at some point $(x',\hat \xi')$ in $\ellip(e)$ and ends at
$(x,\hat \xi)$. A \textit{light ray} is an integral curve of the
\textit{Hamilton vector field} $H_P$ lying in the \emph{characteristic
  set}
$\Sigma (P) = \{ (x,\hat{\xi})\in \mathbb{R}^{d}\times
\mathbb{S}^{d-1} : \hat{\xi}_{0}^{2}-\sum \hat{\xi}_{j}^{2} = 0\}$.
Here the Hamilton vector field is given by
\[
 H_P = 2 \xi_0 \partial_{x_0} -2 \xi_1 \partial_{x_1} - \cdots - 2 \xi_n \partial_{x_n}.
\]
Note that the Hamilton vector field has no $\xi$ derivatives, so if
$(x, \hat{\xi})$ is connected by a light ray to $(x', \hat{\xi}')$,
then $\hat{\xi} = \hat{\xi}'$.  See  Figure~\ref{fig:propsingpic}.

\begin{figure}[ht]
  \label{fig:propsingpic}
\labellist
\small
\pinlabel $\ellip(g)$ [l] at 115 21
\pinlabel $\ellip(e)$ [l] at 47 34
\pinlabel $\esssupp(b)$ [l] at 168 48
\endlabellist
 \includegraphics[width=10cm]{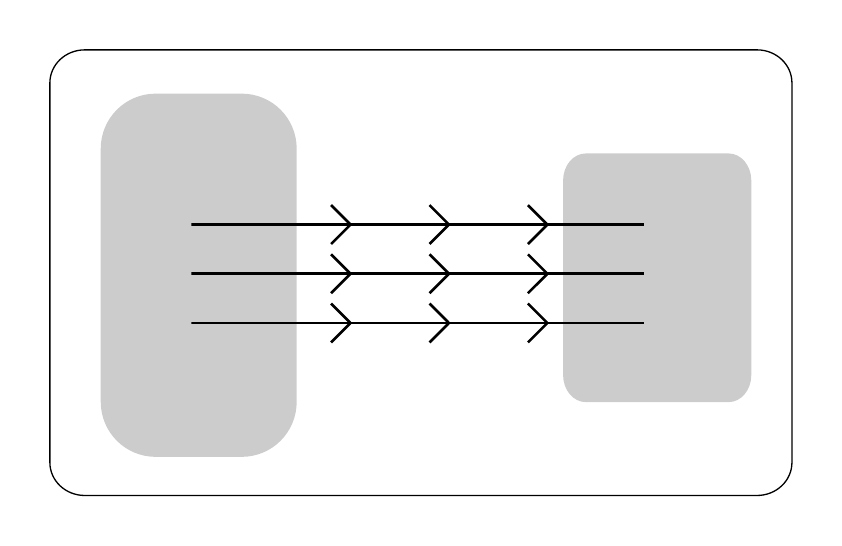}
\caption{
The horizontal coordinate $s$ is chosen so that the Hamilton vector field is $\partial_s$, thus the arrows are integral curves of the Hamilton vector field.}
\end{figure}

We are now ready to state  H\"ormander's propagation of singularities theorem.
\begin{theorem}
  \label{main-thm}
  Let $b \in S^k$, $e \in S^k$, $g \in S^{k-1}$ for some real
  $k$ be compactly supported in $x$. Suppose $u\in H^{-N}$, $Eu \in L^{2}$, and $GPu \in L^{2}$.  If
  $b$ is controlled by $e$ through $g$, then $Bu \in L^{2}$ and
  there is a constant $C$ (independent of $u$) such that
\begin{equation}\label{e:propsing} 
\|B u\|_{L^2} \le C( \|Eu\|_{L^2} + \|G Pu\|_{L^2} + \|u\|_{H^{-N}}). 
\end{equation}
\end{theorem}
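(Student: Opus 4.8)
The plan is to prove~\eqref{e:propsing} by a positive commutator argument, set up as an induction on the order $k$ and made rigorous by a regularization; the flatness of $\Box$ helps, since $\Box$ is a Fourier multiplier and its bicharacteristics are straight lines. Two preliminary reductions: because $b$ is compactly supported in $x$ its essential support is compact in $\mathbb S^{d-1}$, so a microlocal partition of unity lets me assume $\esssupp(b)$ lies in a small neighbourhood of one point $q_1=(x_1,\hat\xi)$, and the hypothesis that $b$ is controlled by $e$ through $g$ then supplies a single light ray $\gamma\subset\ellip(g)$ from some $q_0=(x_0,\hat\xi)\in\ellip(e)$ to $q_1$. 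Also, everything below should be applied to $u_\delta=\langle\delta D\rangle^{-M}u$ ($M$ large) rather than $u$: this is smooth, so all pairings are literally defined, and since $\langle\delta D\rangle^{-M}$ is a Fourier multiplier it commutes with $\Box$ and is simply absorbed into the symbol whenever composed from the left with a pseudodifferential operator, which keeps all constants uniform in $\delta\in(0,1]$; the limit $\delta\to0$ is taken at the end, and I suppress $\delta$ from the notation.

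The main work is to construct the commutant. I want a real symbol $a\in S^{k-1/2}$, supported in a thin conic neighbourhood of $\gamma$ lying in $\ellip(g)$, positive and elliptic along $\gamma$, such that along $\gamma$ — parametrised so that $H_P=\partial_s$ — the function $s\mapsto a$ first increases on a short initial piece (short enough to lie in $\ellip(e)$) and then strictly decreases, through $q_1$ and out to the end of its support. Then $i[P,A^*A]\in\Psi^{2k}$ has principal symbol $H_P(a^2)=2a\,H_Pa$, and the prescribed monotonicity is exactly what arranges, modulo $S^{2k-1}$,
\[
H_P(a^2)=e_0^2-c^2,
\]
with $e_0\in S^k$ essentially supported in $\ellip(e)$ and $c\in S^k$ elliptic on $\esssupp(b)$. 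One must moreover build $a$ with enough care — this is the step I expect to be genuinely hard — that $c$ is a smooth symbol and that every bounded family of order-$k$ operators with essential support in $\esssupp(a)$ is controlled in $L^2$ by $\operatorname{Op}(c)$, by $\operatorname{Op}(e_0)$, by a microlocal parametrix for $\Box$ on the part of $\esssupp(a)$ off $\Sigma(P)$, and by lower-order operators supported near $\gamma$. Patching the finitely many rays from the partition of unity finishes the construction; this is where the control hypothesis and the straightness of the rays are used.

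With $a$ in hand I run the commutator identity. As $\Box$ is formally self-adjoint, $\langle i[P,A^*A]u,u\rangle=-2\,\Im\langle Au,APu\rangle$. Because $\esssupp(a)\subset\ellip(g)$ and $G\in\Psi^{k-1}$, a microlocal parametrix for $G$ writes $APu=A'GPu+(\text{smoothing})\,u$ with $A'\in\Psi^{1/2}$, so $|\langle i[P,A^*A]u,u\rangle|\le 2|\langle GPu,(A')^*Au\rangle|+C\|u\|_{H^{-N}}^2$, and $(A')^*A\in\Psi^k$ has essential support in $\esssupp(a)$, hence by the construction of $a$ is controlled by $\operatorname{Op}(c)$, $\operatorname{Op}(e_0)$ and elliptic regularity for $\Box$. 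Feeding the identity for $H_P(a^2)$ through the calculus, bounding the cross term by Cauchy--Schwarz with a small parameter, and absorbing a multiple of $\|\operatorname{Op}(c)u\|_{L^2}^2$ on the left, I get
\[
\|\operatorname{Op}(c)u\|_{L^2}^2\ \le\ C\bigl(\|\operatorname{Op}(e_0)u\|_{L^2}^2+\|GPu\|_{L^2}^2+\|A_0u\|_{L^2}^2+\|u\|_{H^{-N}}^2\bigr),
\]
where $A_0\in\Psi^{k-1/2}$ collects the lower-order remainders and still has $\esssupp(A_0)\subset\esssupp(a)$. Since $e_0$ is supported in $\ellip(e)$, another application of an elliptic parametrix gives $\|\operatorname{Op}(e_0)u\|_{L^2}\le C(\|Eu\|_{L^2}+\|u\|_{H^{-N}})$.

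It remains to close the estimate by induction on $k$. Applied at order $k-\tfrac12$ to the triple $A_0$, $\langle D\rangle^{-1/2}E$, $\langle D\rangle^{-1/2}G$ — which satisfies the same control relation, as this depends only on elliptic and essential-support sets, all unchanged — the theorem bounds $\|A_0u\|_{L^2}$ by $C(\|Eu\|_{L^2}+\|GPu\|_{L^2}+\|u\|_{H^{-N}})$, and, being uniform over bounded families of symbols, it also supplies the uniformity in $\delta$ used above. Once the order drops to $\le-N$ the claim is trivial, since then $B$ maps $H^{-N}$ boundedly into $L^2$, so the induction terminates after finitely many half-steps. Combining, $\|\operatorname{Op}(c)u\|_{L^2}$ — and hence $\|Bu\|_{L^2}$, because $c$ is elliptic on $\esssupp(b)$ — is bounded by the right-hand side of~\eqref{e:propsing}, and letting $\delta\to0$ completes the proof.
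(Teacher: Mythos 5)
Your proposal is correct and follows essentially the same route as the paper: a positive commutator argument with an escape function that increases briefly inside $\ellip(e)$ and then decreases along the ray (your $H_P(a^2)=e_0^2-c^2$ is exactly the sum-of-squares form of the G{\aa}rding step used here), an induction that gains half a derivative per step on the lower-order remainder until it is absorbed into $\|u\|_{H^{-N}}$, and a regularization by a uniformly bounded family of Fourier multipliers commuting with $\Box$. The only real difference is organizational --- you regularize at the outset and induct on the order of the theorem itself, whereas the paper proves a fixed-order lemma, inducts on the order of the error term, and regularizes last --- and the escape function you describe but do not write down is precisely the explicit $\varphi(t)\chi\langle\zeta\rangle^{k-1/2}$ constructed in the paper.
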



Since $B$ and $E$ both map
$H^k \to L^2$, this says in particular that if we have a solution to
$\Box u=0$ which is in some Sobolev space $H^{-N}$, and if we happen
to know that it is in some better Sobolev space $H^k$ in the positions
and directions given by the elliptic set of $e$, then it is also in
this better Sobolev space $H^k$ at points and
directions 
emanating from that set. In this sense \eqref{e:propsing} is a
propagation of regularity estimate. Since a singularity is a place
where regularity is lacking, we also call \eqref{e:propsing} a
propagation of singularities estimate.

As $b$ is compactly supported in $x$, without loss of generality
we can take $e$ and $g$ also to have compact support in $x$ and then
$u$ need only lie in $H^{-N}$ locally.  In particular, if
$\chi\in C^{\infty}_{c}$ satisfies $\chi \equiv 1$ on the support of
$g$, then
\begin{equation*}
  \norm{B_{0}u}\leq C \left( \norm{Eu} + \norm{GPu} + \norm{\chi u}_{H^{-N}}\right).
\end{equation*}


For example, let $u = \delta(x_0 - x_1)$. Then $u \in H^s_{\text{loc}}$ if and only if $s<1/2$. Microlocally, however, one can show that, for almost all $(x,\hat \xi)$, we have $u \in H^s$ at $(x,\hat \xi)$ for all $s$. More precisely, $u$ is in these better Sobolev spaces at $(x,\hat \xi)$ if and only if  $x_0 \ne x_1$ or $|\hat \xi_0| \ne 1/ \sqrt 2$ or $\hat \xi_1 \ne - \hat \xi_0$.

This theorem was first proved by using  a Fourier integral operator to microlocally conjugate $P$ to the simpler operator $\partial_{x_0}/i$, for which the theorem is easy: see \cite{hormanderfio} and \cite{dh}, specifically Section 6 of the latter paper. The proof we present here uses a positive commutator argument with a
pseudodifferential commutant. It is a special case of H\"ormander's
proof in Section 3.5 of \cite{hormander}. Other presentations of this
proof in more general settings than the one here can be found in
Section E.4 of \cite{dyatlovzworski} and Section 8 of
\cite{hintz}. 

The crux of the proof involves bounding the pairing
\begin{equation}\label{e:aapapa}
    \Im \langle Au, APu\rangle = \frac{1}{2i} \left( \langle PA^{*}Au,
      u\rangle - \langle A^{*}APu, u\rangle\right) = \left\langle \frac{1}{2i}[P,A^{*}Au]u,u\right\rangle.
\end{equation}
for a well-chosen $A\in \Psi^{k-\frac{1}{2}}$. We bound \eqref{e:aapapa} from below via a
straightforward application of Cauchy--Schwarz, and from above by arranging that
the commutator $[P, A^{*}A]/i$ is almost negative. 

As we will see below in Section \ref{s:micro}, the principal symbol of $[P,A^*A]/i$ is $H_P(|a|^2)$: this is the fundamental relationship between commutators and Hamilton vector fields. We will see that to arrange that $[P, A^{*}A]/i$ is almost negative in the right sense we must arrange that $H_P(|a|^2)$ is negative on $\esssupp(b) \setminus \ellip(e)$, i.e. that $|a|^2$ is decreasing along the Hamilton flow in this region. Such an $a$ is called an \textit{escape function}, a terminology which goes back to \cite{mrs}. Constructing an escape function is straightforward here because of the simple geometry of our spacetime.

Theorem~\ref{main-thm} has many generalizations.  Most directly, a
nearly identical proof strategy applies to operators of real principal
type that are not necessarily self-adjoint: i.e. only the principal symbol of $P$ needs to be real, and $P$ can be a variable coefficient differential operator or even a pseudodifferential operator. Constructing an escape function is then done by locally straightening out the Hamilton flow. The additional error term arising in equation~\eqref{e:aapapa}
from $P-P^{*}$ is a lower order term.  One can also treat cases where the principal
part of $P$ is not real, provided the imaginary part has a sign; this
sign dictates the direction in which we can propagate regularity.

A common use for such propagation results is to help obtain global
estimates for solutions of wave-like equations.  As an example, in
the non-trapping setting, these estimates let you propagate regularity
from where it is a priori known (e.g., in the distant past for the forward
propagator).  Closing the estimate, however, typically requires
additional estimates, which can be more complicated in settings with trapping.


\section{Preliminaries from microlocal analysis}\label{s:micro}

The important calculations take place on the level of symbols. To translate the results into statements about operators, we use the following formula for the symbol of the composition of two pseudodifferential operators. If $A_1 \in \Psi^{m_1}$ and $A_2 \in \Psi^{m_2}$ have symbols $a_1$ and $a_2$ respectively, then the composition $A_1A_2$ belongs to $\Psi^{m_1+m_2}$ and has symbol given by $a_3 \in S^{m_1+m_2}$ such that
\begin{equation}\label{e:composition}
r:=a_3 - a_1a_2 - \frac 1 i \partial_\xi a_1 \cdot \partial_x a_2\in S^{m_1+m_2 -2},
\end{equation}
and $\esssupp (r) \subset \esssupp (a_1) \cap \esssupp(a_2)$.

We define the \emph{principal symbol} $\sigma_{m}(A)$ of an operator
$A\in \Psi^{m}$ to be the equivalence class of its symbol $a$ in
$S^{m}/S^{m-1}$.  An immediate consequence of the composition formula
above is the observation that
\begin{equation*}
  \sigma_{m_{1}+m_{2}}(A_{1}A_{2}) = \sigma_{m_{1}}(A_{1})\sigma_{m_{2}}(A_{2}).
\end{equation*}
Similarly, for $A_{1}\in \Psi^{m_{1}}$ and $A_{2}\in \Psi^{m_{2}}$,
the commutator $[A_{1},A_{2}] = A_{1}A_{2}-A_{2}A_{1}$ lies in
$\Psi^{m_{1}+m_{2}-1}$ and satisfies
\begin{equation*}
  \sigma_{m_{1}+m_{2}-1}\left( [A_{1},A_{2}]\right) = \frac{1}{i} \left(
    \frac{\partial \sigma_{m_{1}}(A_{1})}{\partial\xi}
    \frac{\partial \sigma_{m_{2}}(A_{2})}{\partial x} -
    \frac{\partial \sigma_{m_{2}}(A_{2})}{\partial\xi}
    \frac{\partial \sigma_{m_{1}}(A_{1})}{\partial x}\right) = \frac 1 i H_{\sigma_{m_{1}}(A_{1})}\left(\sigma_{m_{2}}(A_{2})\right), 
\end{equation*}
where the Hamilton vector field $H_{\sigma_{m_{1}}(A_{1})}$ is defined by the equation.

Concretely, the principal symbol of $P = \Box$ is
\begin{equation*}
  \sigma_{2}(P) = \xi_{0}^{2} - \xi_{1}^{2} - \dots - \xi_{n}^{2},
\end{equation*}
and its Hamilton vector field is
\begin{equation*}
  H_{P} = 2\xi_{0}\partial_{x_{0}} - 2\xi_{1}\partial_{x_{1}} - \dots -
  2\xi_{n} \partial_{x_{n}}.
\end{equation*}

We also use the following adjoint formula: If $A \in \Psi^m$ has symbol $a$, then the adjoint operator has symbol $a' \in S^m$ obeying
\[
 a' - \overline{a} \in S^{m-1}.
\]
The composition formula can be found in Theorem 4.16 of \cite{hintz} and the adjoint formula can be found in Corollary 4.13 of \cite{hintz}. They are easy to check for differential operators, i.e. in the case that the symbols are polynomials in $\xi$. In general they may be checked by writing out the definitions and Taylor expanding.\footnote{Observe that if $A_3=A_1A_2$ then by definition $A_3 u(x)$ is given by \eqref{e:pseudodef} with $a(x,\xi)$ replaced by 
\[
  a_3(x,\xi) = \frac 1 {(2\pi)^{d}}  \int\!\! \int e^{-ix\cdot \xi}e^{i x\cdot \eta} e^{-iz\cdot \eta} e^{iz \cdot\xi} a_1(x,\eta) a_2(z,\xi)\,d\eta \,dz = \frac 1 {(2\pi)^{d}}  \int\!\! \int e^{-iw \cdot \zeta} a_1(x,\xi + \zeta) a_2(x+w,\xi)\,d\zeta \,dw,
\]
where we used $ e^{-ix\cdot \xi}e^{i x \cdot \eta} e^{-iz\cdot \eta} e^{iz \cdot \xi} = e^{i(x-z)\cdot (\eta - \xi)}$, substituted $ w = z-  x$ and $\zeta = \eta - \xi$, and ignored issues of convergence. To deduce the expansion \eqref{e:composition}, Taylor expand
\[
a_1(x,\xi + \zeta) = a_1(x,\xi) + \zeta \cdot \partial_\xi a_1(x,\xi) + \cdots, \qquad a_2(x+w,\xi) = a_2(x,\xi) + w \cdot \partial_{x}a_2(x,\xi) + \cdots,
\]
and use the fact that $\frac 1 {(2\pi)^d} \int \int e^{-iw\zeta} w^\alpha \zeta^\beta \,d\zeta \, d w = i^{|\beta|}\int w^\alpha \partial^\beta \delta(w)  dw$ which is $(-i)^{|\beta|}$ if $\alpha = \beta$ and $0$ otherwise. The convergence issues can be handled by a partition of unity, and the adjoint formula can be proved in the same way: see Chapters 8 and 9 of \cite{wong} for more.
}

Our first application of the composition formula is to the following elliptic estimate.

\begin{lemma}
Let $a \in S^m$ and $a' \in S^{m'}$ be such that $a$ is compactly
supported in $x$ and $\esssupp a \subset \ellip(a')$.  For any $k$
and $N$, if $u \in H^{-N}$ and $A'u \in H^{k+m-m'}$, then $Au \in
H^{k}$ and there is $C$ (independent of $u$) such that
\begin{equation}\label{e:elliptic}
 \|A u\|_{H^k} \le C (\|A'u\|_{H^{k+m-m'}} + \|u\|_{H^{-N}}).
\end{equation}
\end{lemma}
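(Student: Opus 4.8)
The plan is to construct a parametrix: a pseudodifferential operator $Q$ such that $QA'$ agrees with $A$ up to a smoothing error localized in $\esssupp(a)$. Since $\esssupp(a) \subset \ellip(a')$, I can choose a symbol $q$ supported (in the $\xi \to \infty$ sense) in a neighborhood of $\esssupp(a)$ contained in $\ellip(a')$, and set $q = a/a'$ on that neighborhood, cut off near fiber infinity to avoid the zero set of $a'$. Then $q \in S^{m-m'}$ by the ellipticity lower bound $|a'| \ge \varepsilon \langle \xi \rangle^{m'}$. Let $Q \in \Psi^{m-m'}$ have symbol $q$. By the composition formula \eqref{e:composition}, $QA'$ has symbol $qa' + \frac 1 i \partial_\xi q \cdot \partial_x a' + r$ with $r \in S^{m-2}$; since $qa' = a$ on the relevant region and all correction terms are one order lower and essentially supported where $q$ is, we get $QA' = A + R_1$ where $R_1 \in \Psi^{m-1}$ — but this is not yet enough, as $m-1$ need not beat $-N$.

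The fix is to iterate. Having $QA' = A + R_1$ with $R_1 \in \Psi^{m-1}$ and $\esssupp(\sigma(R_1)) \subset \esssupp(a)$, I apply the same construction to $R_1$ in place of $A$: find $Q_1 \in \Psi^{m-1-m'}$ with $Q_1 A' = R_1 + R_2$, $R_2 \in \Psi^{m-2}$. Continuing, after finitely many steps (until the order drops below $-N-k$, say, or formally by an asymptotic summation / Borel argument to produce a single $\tilde Q$), I obtain $\tilde Q A' = A + R$ with $R \in \Psi^{-M}$ for $M$ as large as I like. Choosing $M$ so that $-M \le -N - k$, the operator $R$ maps $H^{-N} \to H^{k}$, so $\|Ru\|_{H^k} \le C\|u\|_{H^{-N}}$.

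With the parametrix in hand the estimate is immediate: $Au = \tilde Q A' u - Ru$, and since $\tilde Q \in \Psi^{m-m'}$ maps $H^{k+m-m'} \to H^k$ boundedly (the mapping property quoted after \eqref{e:pseudodef}), we get
\[
\|Au\|_{H^k} \le \|\tilde Q A' u\|_{H^k} + \|Ru\|_{H^k} \le C\|A'u\|_{H^{k+m-m'}} + C\|u\|_{H^{-N}},
\]
which is \eqref{e:elliptic}; in particular $Au \in H^k$ since the right-hand side is finite. The hypothesis $u \in H^{-N}$ is what makes the error term meaningful and is also implicitly needed to know $A'u$ and $Au$ make sense as distributions to begin with.

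The main obstacle, and the only place requiring care, is the bookkeeping of essential supports through the composition formula: one must check at each stage that the new error $R_{j+1}$ is still essentially supported inside $\esssupp(a) \subset \ellip(a')$, so that the next division by $a'$ is legitimate and $q_{j+1}$ genuinely lies in the stated symbol class with the ellipticity bound controlling the $\langle\xi\rangle$ growth. This is exactly where the clause "$\esssupp(r) \subset \esssupp(a_1) \cap \esssupp(a_2)$" in \eqref{e:composition} is used. The asymptotic-summation step (combining infinitely many $Q_j$ of decreasing order into one symbol) is standard and can be cited or replaced by simply stopping after finitely many steps, since only finite regularity is needed.
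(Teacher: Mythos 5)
Your proposal is correct and follows essentially the same route as the paper: divide $a$ by $a'$ on the elliptic set to build a parametrix $Q\in\Psi^{m-m'}$ with $QA'=A+R_1$, then iterate on the remainder until its order drops below $-N$, tracking essential supports via the clause $\esssupp(r)\subset\esssupp(a_1)\cap\esssupp(a_2)$. The only cosmetic difference is that you assemble the successive correctors into a single $\tilde Q$ with one remainder of large negative order, whereas the paper iterates at the level of the estimates themselves; both are fine since, as you note, only finitely many steps are needed.
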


\begin{proof}
Because $a'$ is elliptic on the essential support of $a$, there exists $g \in S^{m-m'}$ such that $ga' = a$ when $\xi$ is large.\footnote{To construct such a $g$, for $C$ as in the definition of ellipticity, let
$g(x,\xi) a'(x,\xi)= (1- \chi_C(\xi))\psi(x,\hat \xi)a(x,\xi), $
where $\chi_C \in C_c^\infty(\mathbb R^d)$ is identically $1$ on the ball of radius $C$ centered at $0$, $\psi \in C^\infty(\mathbb R^d \times \mathbb S^{d-1})$, $\supp \psi \subset \ellip(a')$, and $\psi = 1$ near $\esssupp(a)$.} By the composition formula \eqref{e:composition}, there is $R_1 \in \Psi^{m-1}$ such that
\[
 G A' = A + R_1.
\]
Since $G\colon  H^{k+m - m'}\to H^k$ is bounded, it follows that
\[
 \|Au\|_{H^k} \le C (\|A'u\|_{H^{k+m-m'}} + \|R_1u\|_{H^k}).
\]
This gives \eqref{e:elliptic} when $-N = k+m-1$. To get it for larger values of $N$ we apply the same reasoning with $A$ replaced by $R_1$. That gives $R_2 \in \Psi^{m-2}$ such that
\[
 \|R_1 u\|_{H^k} \le C(\|A' u\|_{H^{k+m-1-m'}} + \|R_2 u\|_{H^{k}}),
\]
which implies \eqref{e:elliptic} when $-N = k+m-2$. Repeating this argument gives \eqref{e:elliptic} for arbitrary $N$.
\end{proof}

We further require G{\aa}rding's inequality, which states that
operators with non-negative principal symbols are non-negative to
leading order.  In other words, if $A\in \Psi^{m}$ is compactly
supported in $x$ and has $\sigma_{m}(A) \geq 0$ for $|\xi|$
sufficiently large, then there is some constant $C$ so that
\begin{equation*}
  \Re \langle Au, u\rangle \geq - C\| u\|^{2}_{H^{\frac{m-1}{2}}}.
\end{equation*}
The proof of this general form of G{\aa}rding's inequality is somewhat
involved (see Theorem 18.1.14 of \cite{hvol3}), but is straightforward when the principal symbol of $A$ is a square (or sum of
squares).  Indeed, if $\sigma_{m}(A) = b^{2}$, then by the composition
and adjoint formulas, we may write $A = B^{*}B + R$, where $B\in
\Psi^{\frac{m}{2}}$ and $R \in \Psi^{m-1}$.  It then follows that
\begin{equation*}
  \Re \langle Au, u\rangle = \| Bu\|^{2} + \langle Ru , u\rangle.
\end{equation*}
The first term on the right side is non-negative.  Writing $R$ as the
composition of two operators of order $\frac{m-1}{2}$ (such as
$R = (I - \Delta)^{\frac{m-1}{4}} \circ (I -
\Delta)^{-\frac{m-1}{4}}R$) then shows that this last term is bounded
below by $-C \| u\|_{H^{\frac{m-1}{2}}}^{2}$ for some $C$.  Observe further that
combining this proof with the elliptic estimate \eqref{e:elliptic} shows that if
$A\in \Psi^{m}$ is compactly supported in $x$, has non-negative
principal symbol, and $B\in \Psi^{0}$ has $\esssupp a \subset
\ellip(b)$, then for any $N$ there is a constant $C$ with
\begin{equation}
  \label{e:garding-micro}
  \Re \langle A u, u\rangle \geq - C \| Bu\|_{H^{\frac{m-1}{2}}}^{2} - C
  \| u\| _{H^{-N}}^{2}.
\end{equation}

\section{Proof of theorem}

In this section we use the notation $(z,\zeta)$ for a variable point of $\mathbb R^d \times \mathbb R^d$, to avoid a typographical collision with $(x,\hat \xi)$ and $(x',\hat \xi')$ which we use to denote points in the essential support or elliptic set of a particular symbol.

The main lemma in the proof of the theorem is the following similar but weaker statement.

\begin{lemma}
  \label{main-lemma}
Let $b \in S^k$, $e \in S^k$, $g \in S^{k-1}$ for some real $k$ be
compactly supported in $x$.  Let $\Lambda_{s} =
  (I - \Delta_{\mathbb{R}^{d}})^{s/2} \in \Psi^{s}$. If $b$ is controlled by $e$ through $g$, then for every $N$ there is a constant $C$ such that
   \begin{equation}\label{e:bug1}
    \norm{Bu}^{2} \leq C \left( \norm{\Lambda_{1-k}Gu}_{H^{k-\frac{1}{2}}}^{2}
      + \norm{Eu}^{2} + \norm{GPu}^{2} + \norm{u}_{H^{-N}}^{2}\right),
  \end{equation}
for all functions $u \in H^{k+1}$.
\end{lemma}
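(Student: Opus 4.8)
The plan is to run a positive commutator argument centered on the pairing \eqref{e:aapapa} for a carefully chosen \emph{commutant} $A\in\Psi^{k-1/2}$ with real symbol $a$, compactly supported in $x$ and essentially supported in $\ellip(g)$. Since $u\in H^{k+1}$, the functions $Au\in H^{3/2}$ and $APu\in H^{-1/2}$ lie in dual-paired Sobolev spaces, so the manipulations in \eqref{e:aapapa} are legitimate and $\Im\langle Au,APu\rangle=\langle\tfrac1{2i}[P,A^*A]u,u\rangle$. I will bound the left-hand side from below by Cauchy--Schwarz and the right-hand side from above by arranging $[P,A^*A]/i$ to be almost negative; sandwiching these produces a bound on $\|B_1u\|^2$ for an auxiliary $B_1\in\Psi^k$ elliptic near $\esssupp(b)$, and then $\|Bu\|^2\le C(\|B_1u\|^2+\|u\|_{H^{-N}}^2)$ by the elliptic estimate of Section~\ref{s:micro}, using $\esssupp(b)\subset\ellip(b_1)$.

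The technical heart is the construction of the \emph{escape function} $a$. Because $H_P$ generates translation in spacetime with the direction $\hat\xi$ held fixed, and because ``$b$ is controlled by $e$ through $g$'' furnishes, through each $(x,\hat\xi)\in\esssupp(b)\subset\Sigma(P)$, a Hamilton ray inside $\ellip(g)$ emanating from $\ellip(e)$, one can choose $a$ so that $|a|^2$ is strictly decreasing along $H_P$ throughout $\esssupp(b)$ and increases only where one sits in $\ellip(e)$. In symbolic terms one arranges, modulo $S^{2k-1}$,
\[
H_P(|a|^2)=-|b_1|^2+|e_1|^2,
\]
with $b_1,e_1\in S^k$ compactly supported in $x$, $\esssupp(b)\subset\ellip(b_1)$, $\esssupp(e_1)\subset\ellip(e)$, $\esssupp(a)\cup\esssupp(b_1)\cup\esssupp(e_1)\subset\ellip(g)$, and also $\esssupp(a)\subset\ellip(b_1)\cup\ellip(e)$. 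The order $k-\tfrac12$ of $A$ is dictated by this bookkeeping: then $A^*A\in\Psi^{2k-1}$, $[P,A^*A]\in\Psi^{2k}$ has principal symbol $\tfrac1iH_P(|a|^2)$ matched to $|b_1|^2$ (so $\|B_1u\|^2$ matches $\|Bu\|^2$), and the remainder $[P,A^*A]/i-\mathrm{Op}(H_P(|a|^2))$ lies in $\Psi^{2k-1}$, exactly one order below, which is what produces the $\Lambda_{1-k}Gu$ term.

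Using the composition and adjoint formulas together with the above (in particular the square form of G\aa rding's inequality), write $[P,A^*A]/i=-B_1^*B_1+E_1^*E_1+R$ with $R\in\Psi^{2k-1}$ essentially supported in $\ellip(g)$. Pairing with $u$ gives $\|B_1u\|^2=-2\Im\langle Au,APu\rangle+\|E_1u\|^2+\langle Ru,u\rangle$. Here $\|E_1u\|^2\le C(\|Eu\|^2+\|u\|_{H^{-N}}^2)$ by the elliptic estimate since $\esssupp(e_1)\subset\ellip(e)$; and, writing $A_g:=\Lambda_{1/2}G\in\Psi^{k-1/2}$ so that $\ellip(A_g)=\ellip(g)$ contains $\esssupp(R)$, one has $R=A_g^*SA_g+R_\infty$ with $S\in\Psi^0$ and $R_\infty$ smoothing, whence $|\langle Ru,u\rangle|\le C\|A_gu\|^2+C\|u\|_{H^{-N}}^2$ with $\|A_gu\|=\|\Lambda_{1-k}Gu\|_{H^{k-1/2}}$; this step can equivalently be run through the microlocal G\aa rding inequality \eqref{e:garding-micro}. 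For the remaining term, Cauchy--Schwarz in dual Sobolev spaces gives $|\langle Au,APu\rangle|\le\|Au\|_{H^{1/2}}\|APu\|_{H^{-1/2}}$. Since $\Lambda_{-1/2}A\in\Psi^{k-1}$ is essentially supported in $\ellip(g)=\ellip(G)$, the elliptic estimate yields $\|APu\|_{H^{-1/2}}\le C(\|GPu\|+\|u\|_{H^{-N}})$, and since $\Lambda_{1/2}A\in\Psi^{k}$ is essentially supported in $\ellip(b_1)\cup\ellip(e)$, a partition of unity and the elliptic estimate give $\|Au\|_{H^{1/2}}\le C(\|B_1u\|+\|Eu\|+\|u\|_{H^{-N}})$. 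Inserting these, applying Young's inequality, and absorbing the resulting small multiple of $\|B_1u\|^2$ into the left side gives $\|B_1u\|^2\le C(\|\Lambda_{1-k}Gu\|_{H^{k-1/2}}^2+\|Eu\|^2+\|GPu\|^2+\|u\|_{H^{-N}}^2)$, and \eqref{e:bug1} follows from $\|Bu\|\le C(\|B_1u\|+\|u\|_{H^{-N}})$.

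I expect the main obstacle to be the escape function construction: enforcing all the support and ellipticity requirements simultaneously — especially keeping $\esssupp(a)$ inside $\ellip(b_1)\cup\ellip(e)$ near the far end of the propagation region while $a$ is forced to be compactly supported and $|a|^2$ decreasing along the flow — and verifying that the symbolic remainders land at precisely the orders making the Sobolev indices in \eqref{e:bug1} come out correctly. The flat geometry of $\Box$, for which the flow is an explicit linear translation, is exactly what makes this bookkeeping tractable.
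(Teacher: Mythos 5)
Your overall strategy is the paper's: the same commutant $A\in\Psi^{k-1/2}$, the same pairing, Cauchy--Schwarz below, symbol positivity plus G{\aa}rding above, with the remainder in $\Psi^{2k-1}$ absorbed through $\ellip(g)$ to produce the $\norm{\Lambda_{1-k}Gu}_{H^{k-1/2}}$ term. But one step is genuinely broken: you cannot simultaneously have $H_P(a^2)=-b_1^2+e_1^2 \pmod{S^{2k-1}}$, $a$ compactly supported, $\esssupp(e_1)\subset\ellip(e)$, \emph{and} $\esssupp(a)\subset\ellip(b_1)\cup\ellip(e)$. At the ``far'' boundary of $\supp(a)$ (past the target point, away from $\ellip(e)$) both $H_P(a^2)$ and $e_1$ must vanish, so the identity forces $b_1^2=-H_P(a^2)+e_1^2$ to degenerate there; since ellipticity requires a uniform lower bound $\gtrsim\langle\zeta\rangle^{2k}$ on a neighborhood, $b_1$ cannot be elliptic at those boundary points, which nevertheless lie in $\esssupp(a)$. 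Consequently the bound $\norm{Au}_{H^{1/2}}\le C(\norm{B_1u}+\norm{Eu}+\norm{u}_{H^{-N}})$, which you need in order to absorb the $\epsilon\norm{Au}_{H^{1/2}}^2$ term from Cauchy--Schwarz, does not follow from the elliptic estimate \eqref{e:elliptic}. You correctly flagged this as the main obstacle, but it is not a technicality to be checked: as formulated, the requirements are contradictory.

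The paper's resolution is to build the absorption into the symbol inequality rather than recovering $\norm{Au}_{H^{1/2}}$ a posteriori. One demands $-aH_P(a)-\gamma\langle\zeta\rangle a^2\ge 0$ off a compact subset of $\ellip(e)$ (see \eqref{e:ahp}), which is achieved by inserting the weight $e^{-\gamma t}$ into the profile $\varphi$ along the flow so that $2\varphi'+\gamma\varphi\le 0$ except near the initial end of the ray. Applying G{\aa}rding to $Ce^2-aH_P(a)-\gamma\langle\zeta\rangle a^2\ge 0$ then puts $-\gamma\norm{\Lambda_{1/2}Au}^2$ on the right of the upper bound, and taking $\epsilon=\gamma/2$ in Cauchy--Schwarz absorbs the troublesome term with no elliptic estimate on $\esssupp(a)$ needed. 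The quantity actually controlled is $\norm{\Lambda_{1/2}Au}^2$ itself, and $B_1:=\Lambda_{1/2}A$ is elliptic at the chosen point of $\esssupp(b)$ because $a\ne 0$ there; a partition of unity over the compact set $\esssupp(b)$ then yields the estimate for $B$. If you want to keep your sum-of-squares formulation, you must either add the term $\gamma\langle\zeta\rangle a^2$ to $b_1^2$ (as in the paper's footnote, where the commutator symbol plus $Ce^2$ is written exactly as $a_1^2+a_2^2$) or replace your elliptic-estimate step by a second G{\aa}rding argument exploiting $b_1^2-\gamma\langle\zeta\rangle a^2\ge -Ce^2$; either way the exponential weight in the escape function is the missing ingredient. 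The remaining steps of your argument (the treatment of $\langle Ru,u\rangle$, the identification $\norm{\Lambda_{1/2}Gu}=\norm{\Lambda_{1-k}Gu}_{H^{k-1/2}}$, and the elliptic bound on $\norm{APu}_{H^{-1/2}}$) are correct and match the paper.
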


\begin{proof}
We assume without loss of generality that $\esssupp (e) \subset \ellip (g)$; this
can be arranged to hold without changing the hypotheses by shrinking
the essential support of $e$.  An application of the elliptic
estimate~\eqref{e:elliptic} then yields the result with the original
$E$. 

Similarly, by a partition of unity argument, it is enough to prove that, for each point $(x, \hat \xi)$ in $\esssupp(b)$, there is $b_1 \in S^k$ which is elliptic at $(x, \hat \xi)$ such that \eqref{e:bug1} holds with $B$ replaced by $B_1$.

For $A \in \Psi^{k - \frac 12}$ to be specified later, consider the
  pairing
  \begin{equation*}
    \Im \langle Au, APu\rangle = \frac{1}{2i} \left( \langle PA^{*}Au,
      u\rangle - \langle A^{*}APu, u\rangle\right) = \left\langle \frac{1}{2i}[P,A^{*}Au]u,u\right\rangle.
  \end{equation*}

1. We first bound the pairing below.
By Cauchy--Schwarz, for any
  $\epsilon > 0$ we have
  \begin{equation*}
       \Im \langle Au, APu\rangle  \ge  -\frac{1}{4\epsilon} \norm{APu}_{H^{-1/2}}^{2} - \epsilon \norm{Au}_{H^{1/2}}^{2}.
  \end{equation*}
  To write this bound in terms of the $L^{2}$ pairing we use $\|\Lambda_{s} u\|_{L^2} = \|u\|_{H^s}$. That gives
  \begin{equation}\label{e:pairingbelow}
    \left\langle \frac{1}{2i}[P,A^{*}Au]u,u\right\rangle \ge -\frac{1}{4\epsilon}\norm{\Lambda_{-1/2} A Pu}_{L^{2}}^{2} - \epsilon \norm{\Lambda_{1/2}Au}_{L^{2}}^{2}.
  \end{equation}

2. We next bound the pairing above. Recall that
$\frac{1}{2i}[P,A^{*}A] \in \Psi^{2k}$ and has principal symbol
given by $\frac 12 H_{P}(a^{2}) = aH_p(a)$. We will construct $a$ in such a way that there exist a compact set $K \subset \ellip(e)$ and a $\gamma >0$ such that
\begin{equation}\label{e:ahp}
- aH_p(a)  - \gamma \left( \langle \zeta
      \rangle^{1/2}a\right)^{2} \ge 0, \qquad \text{ off of }K.
\end{equation}
Hence there exists $C$ large enough that\footnote{In our setting,
  $\gamma>0$  can be arbitrary, but when dealing with error terms
  $\gamma$ must be taken sufficiently large.}
  \begin{equation*}
    C e^{2} - aH_p(a) - \gamma \left( \langle \zeta
      \rangle^{1/2}a\right)^{2} \ge 0.
  \end{equation*}
In our construction of $a$ we will also obtain
\begin{equation}\label{e:aesssupp}
 \esssupp(a) \subset \ellip(g).
\end{equation}
Using the version of the G{\aa}rding inequality
  stated in~\eqref{e:garding-micro}\footnote{If we want to avoid invoking the full strength of the sharp G\aa rding inequality as in Theorem 18.1.14 of \cite{hvol3}, we must check that the quantity $C e^{2} - aH_p(a) - \gamma \langle \zeta
      \rangle a^{2}$ is a sum of squares. For that, take $\psi \in S^0$ such that $\psi = 0$ near $K$ and $\psi = 1$ near the complement of $\ellip(e)$, and use \eqref{e:princip} to write $C e^{2} - aH_p(a) - \gamma \langle \zeta
      \rangle a^{2} = a_1^2 + a_2^2$, where
\[
 a_1 = e\sqrt{C + (1-\psi^2)( - aH_p(a) - \gamma \langle \zeta
      \rangle a^{2})/e^{2}}, \qquad a_2 = \psi \langle \zeta \rangle^k \chi \sqrt{(-2\varphi'-\gamma\varphi)  \varphi}.
\]
To see that $a_1$ is smooth use the fact that the square root of a positive function is smooth. To see that $a_2$ is smooth, use the explicit formula for $\varphi$.
}
 then yields
  \begin{equation}\label{e:pairingabove}
  \left\langle\frac{1}{2i}[P,A^{*}A]u,u\right\rangle  \leq C
    \norm{Eu}^{2} + C\norm{\Lambda_{1-k}Gu}_{H^{k-\frac{1}{2}}}^{2} + C
    \norm{u}_{H^{-N}} - \gamma \norm{\Lambda_{1/2}Au}^{2}.
  \end{equation}

3. We now combine the two bounds on the pairing.  Putting together \eqref{e:pairingbelow} and \eqref{e:pairingabove} and taking $\epsilon = \gamma/2$
yields
  \begin{equation*}
    \norm{\Lambda_{1/2}Au}^{2} \leq C\left(
      \norm{\Lambda_{1-k}Gu}_{H^{k-\frac{1}{2}}}^{2} + \norm{Eu}^{2} +
      \norm{\Lambda_{-1/2}APu}^{2} + \norm{u}_{H^{-N}}^{2}\right).
  \end{equation*}
To deduce \eqref{e:bug1} we use the fact that the elliptic
  estimate~\eqref{e:elliptic} and \eqref{e:aesssupp} imply  $\norm{\Lambda_{-1/2}APu}^{2} \le C(\norm{GPu}^{2} + \|u\|^2_{H^{-N}})$, and moreover we can use $\Lambda_{1/2}A$ as $B_1$ provided we construct $a$ so that
\begin{equation}\label{e:aellip}
 (x,\hat \xi) \in \ellip(a).
\end{equation}
Thus is remains to construct $a \in S^{k-\frac 12}$ such that \eqref{e:ahp}, \eqref{e:aesssupp}, and \eqref{e:aellip} all hold.

4. To construct $a$, use the fact that
  there is a light ray contained in $\ellip (g)$ starting at some
  $(x',\hat{\xi})\in \ellip(e)$ ending at $(x,\hat{\xi})$, and this light ray can be
  parametrized by
\begin{equation*}
  t \mapsto \left( x_{0} \pm t \hat{\xi}_{0}, x_{j} \mp t
    \hat{\xi}_{j}, \hat{\xi}\right),
\end{equation*}
with the choice of sign depending on whether the flow from
$(x', \hat{\xi})$ to $(x,\hat{\xi})$ is in the direction the flow of
the Hamilton vector field or its opposite.  We assume for the argument
below that it is with the direction of the Hamilton vector field (and
hence the top sign is chosen).

Recall that $(x,\hat{\xi})$ lies on a light ray, and because light rays must lie in the characteristic set of
$P$, $\zeta_{0} \neq 0$ on any light ray. Using also the fact that $\ellip(g)$ and $\ellip(e)$ are open sets, fix $t_{0}>0$,
$\delta > 0$, and an open neighborhood $U \subset \mathbb R ^{d-1} \times
\mathbb S^{d-1}$ of $(x,\hat{\xi})$ such that $\hat{\zeta}_{0}$
is non-vanishing on $U$, such that
\begin{equation}\label{e:Uellg}
 t \in (-t_{0}-\delta, \delta) \text{ and } (y_{1},\dots,
  y_{n}, \hat{\zeta})\in U \qquad \Longrightarrow \qquad (x_{0} + t \hat{\zeta}_{0}, y_{j} - t \hat{\zeta}_{j},
    \hat{\zeta}) \in \ellip (g), 
\end{equation}
and such that
\begin{equation}\label{e:Uelle}
 t \in (-t_{0}-\delta, t_0+\delta) \text{ and } (y_{1},\dots,
  y_{n}, \hat{\zeta})\in U \qquad \Longrightarrow \qquad      (x_{0} + t \hat{\zeta}_{0}, y_{j} - t \hat{\zeta}_{j},
    \hat{\zeta}) \in \ellip (e).
\end{equation}

We fix a real function $\chi_1 \in C^{\infty}_{c}(U)$ that is identically $1$
on a neighborhood of $(x,\hat{\xi})$, and put $\chi = \chi_1^2$. Let
\[
 \varphi(t) =  \exp(-\gamma t + (t-\delta)^{-1} - (t+t_0+\delta)^{-1}), \qquad \text{when } -t_0-\delta < t< \delta,
\]
and $\varphi(t) = 0$ otherwise.

\begin{figure}[ht]
  \label{fig:escape-function}
\labellist
\tiny
\pinlabel $-t_0-\delta$ [l] at 100 110
\pinlabel $-t_0$ [l] at 410 110
\pinlabel $0$ [l] at 755 110
\pinlabel $\delta$ [l] at 1030 110
\endlabellist
\includegraphics[width=10cm]{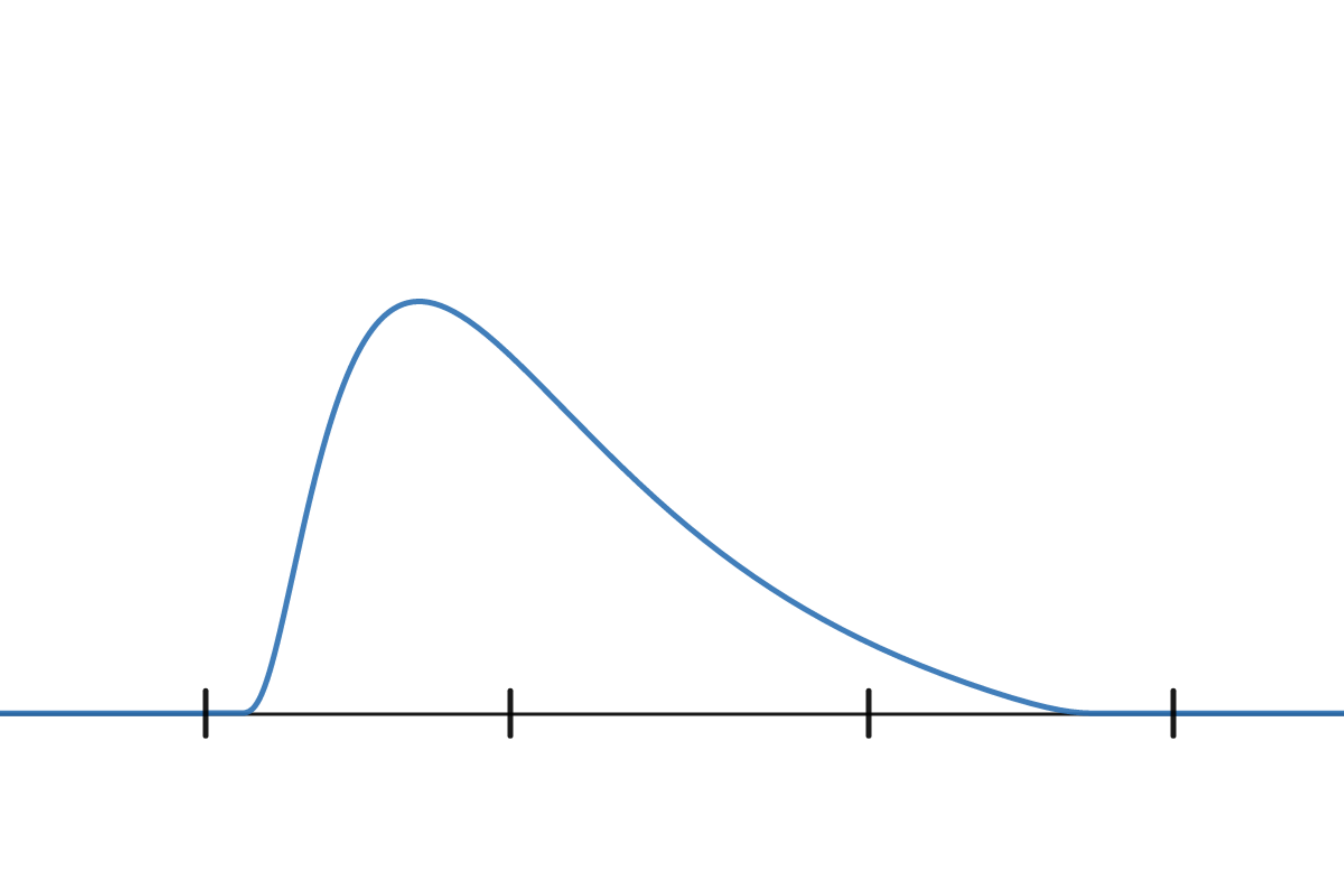} 
\caption{A graph of $\varphi$,  from \url{https://www.desmos.com/calculator/tqmurytqe9}.}
\end{figure}

Let
\begin{equation*}
  a(z, \zeta) = \varphi (t) \chi \left( z_{1} + \hat \zeta_1 t \,,\, \dots , z_{n} + \hat \zeta_n t \, , \,
    \hat{\zeta}\right) \langle \zeta\rangle^{k-\frac{1}{2}},
\end{equation*}
where $t = t(z,\zeta) = (z_0 - x_0)/\hat \zeta_0$.
The expression for $a$ is well-defined as $\hat{\zeta}_{0}\neq 0$ on
the support of $\chi$ and it is straightforward to check that $a \in
S^{k-\frac{1}{2}}$. We write more shortly $a = \varphi \chi \langle \zeta\rangle^{k-\frac{1}{2}}$.

Now \eqref{e:aesssupp} follows from the property \eqref{e:Uellg} of $U$, and \eqref{e:aellip} follows from the fact that $a\langle \zeta\rangle^{-k+\frac 12} \ne 0$ at $(x,\xi)$. It remains to check \eqref{e:ahp}, and for this we compute $aH_P(a)$.

The computation is made straightforward by the fact that we have defined $a$ to be the product of $\varphi(t)$ and a function which is constant along flows of $H_P$:
\begin{equation*}
 a H_{P} (a) =  2 \frac {\zeta_0}{\hat \zeta_0} \varphi' \varphi \chi^2 \langle \zeta \rangle^{2k-1},
\end{equation*}
which has principal symbol
\[
  2 \langle \zeta\rangle^{2k} \varphi ' \varphi \chi^{2},
\]
because $\zeta_0/\hat \zeta_0 = |\zeta|$. Thus the principal symbol of $-a H_p(a) - \gamma \langle \zeta \rangle a^2$ is
\begin{equation}\label{e:princip}
 - (2 \varphi' + \gamma \varphi) \varphi \chi^2 \langle \zeta \rangle^{2k},
\end{equation}
and the set where this is negative is contained in the set where  $ - t_0 - \delta \le t \le -t_0$ and in the support of $\chi$, i.e. by the property \eqref{e:Uelle} of $U$ above it is contained in a compact subset of $\ellip(e)$.
\end{proof}

With Lemma~\ref{main-lemma} in hand, we now turn our attention to the
proof of Theorem~\ref{main-thm}.  We proceed by removing the first
term of the right side of the estimate~\eqref{e:bug1} with an
inductive argument and then finally relax the regularity hypothesis
with a regularization argument.

We first consider the inductive argument.  Indeed, we claim that if $b$ is controlled by $e$ through $g$, then
for any $m$, there is a constant so that
\begin{equation}
  \label{eq:inductive}
  \norm{Bu}^{2}\leq C \left(
    \norm{\Lambda_{1-k}Gu}_{H^{k-\frac{m}{2}}}^{2} + \norm{Eu}^{2} +
    \norm{GPu}^{2} + \norm{u}_{H^{-N}}^{2}\right),
\end{equation}
for all $u \in H^{k+1}$. Lemma~\ref{main-lemma} provides the base case $m=1$.  

Suppose now that the inequality~\eqref{eq:inductive} holds for $m$.  As $\ellip(g)$ and
$\ellip (e)$ are open and $\esssupp (b)$ is closed, we may shrink the
support of $g$ to find a new symbol $g_{1} \in S^{k-1}$ so that
\begin{enumerate}
\item $b$ is controlled by $e$ through $g_{1}$, and 
\item $\langle \zeta\rangle ^{1-\frac{m}{2}}g_{1}$ is controlled by
  $\langle \zeta \rangle^{-\frac{m}{2}}e$ through $\langle \zeta\rangle^{-\frac{m}{2}}g$.
\end{enumerate}
The inductive hypothesis yields the estimate
\begin{equation*}
  \norm{Bu}^{2} \leq  C\left(
    \norm{\Lambda_{1-k}G_{1}u}_{H^{k-\frac{m}{2}}}^{2} + \norm{Eu}^{2}+\norm{G_{1}Pu}^{2}+\norm{u}_{H^{-N}}^{2}\right).
\end{equation*}
We then apply Lemma~\ref{main-lemma} to $\langle \zeta
\rangle^{1-\frac{m}{2}}g_{1}\in S^{k-\frac{m}{2}}$, which is
controlled by $\langle
\zeta\rangle^{-\frac{m}{2}}e\in S^{k-\frac{m}{2}}$ through
$\langle\zeta \rangle^{-\frac{m}{2}} g\in S^{k-1-\frac{m}{2}}$, yielding
\begin{equation*}
    \norm{\Lambda_{1-\frac{m}{2}}G_{1}u}^{2} \leq C\left(
                                       \norm{\Lambda_{1-k}Gu}_{H^{k-\frac{m+1}{2}}}^{2}
                                       +
                                       \norm{\Lambda_{-\frac{m}{2}}Eu}^{2}
                                       +
                                       \norm{\Lambda_{-\frac{m}{2}}GPu}^{2}+\norm{u}_{H^{-N}}^{2}
                                       \right).
\end{equation*}
We note first that for any $v \in H^{r}$, $\norm{\Lambda_{r}v} =
\norm{v}_{H^{r}}$.  One consequence is the estimate
\begin{equation*}
  \norm{\Lambda_{-\frac{1}{2}}v} = \norm{v}_{H^{-\frac{1}{2}}} \leq \norm{v}_{L^{2}}.
\end{equation*}
Similarly,
\begin{equation*}
  \norm{\Lambda_{1-k}G_{1}u}_{H^{k-\frac{1}{2}}} = \norm{\Lambda_{\frac{1}{2}}G_{1}u},
\end{equation*}
so that combining the two estimates yields
\begin{equation*}
  \norm{Bu}^{2} \leq C \left( \norm{\Lambda_{1-k}Gu}_{H^{k-\frac{m+1}{2}}}^{2}
  + \norm{Eu}^{2} + \norm{GPu}^{2} + \norm{u}_{H^{-N}}^{2}\right),
\end{equation*}
finishing the inductive step.

For $m \geq 2N+2k$,
\begin{equation*}
  \norm{\Lambda_{1-k}Gu}_{H^{k-\frac{m}{2}}} \leq C\norm{u}_{H^{-N}},
\end{equation*}
allowing us to remove this term. Thus we have \eqref{e:propsing} for all $u \in H^{k+1}$.

We conclude with the regularization argument which proves \eqref{e:propsing} under the weaker hypotheses of the Theorem. We roughly follow  Exercises E.10 and E.31 from
\cite{dyatlovzworski}.  
We  introduce, for fixed $r>0$, a family of regularizing operators
depending on a parameter $\epsilon \in (0,1)$:
\begin{equation*}
  \Lambda_{\epsilon, -r} = (1 - \epsilon^{2}\Delta_{\mathbb{R}^{d}})^{-r/2}.
\end{equation*}
The inverse of $\Lambda_{\epsilon,-r}$ is given by
\begin{equation*}
  \Lambda_{\epsilon, r} = (1- \epsilon^{2}\Delta)^{r/2}.
\end{equation*}
For each $\epsilon > 0$, $\Lambda_{\epsilon,-r} \in \Psi^{-r}$ with
symbol $\langle \epsilon \zeta\rangle^{-r}$; regarded as a subset of
$S^{0}$, this family of symbols is uniformly bounded in $\epsilon$.
In particular, $\Lambda_{\epsilon,-r}\in \Psi^{-r}$ is a uniformly bounded family
in $\Psi^{0}$ converging to the identity map in $\Psi^{s}$ for any
$s>0$.  Similarly, $\Lambda_{\epsilon, r}\in \Psi^{r}$ is a uniformly
bounded family in $\Psi^{r}$.

One useful application of these operators forms the backbone of the
regularization argument.  The monotone
convergence theorem implies that if $u \in H^{s-r}$ and
$\norm{\Lambda_{\epsilon,-r}u}_{H^{s}}$ is uniformly bounded in
$\epsilon$, then in fact $u \in H^{s}$.


Now let $u$ be as in the statement of the Theorem, let $r = N+k + 1$, and apply \eqref{e:propsing} with $u$ replaced by $\Lambda_{\epsilon, -r} u$, $B$ replaced by $\Lambda_{\epsilon, -r} B \Lambda_{\epsilon, r}$, $E$ replaced by $\Lambda_{\epsilon, -r} E \Lambda_{\epsilon, r}$, and $G$ replaced by $\Lambda_{\epsilon, -r} G \Lambda_{\epsilon, r}$; note that $\Lambda_{\epsilon, r}$ commutes with $P$. We are using here the fact that the symbol of $\Lambda_{\epsilon, -r} B \Lambda_{\epsilon, r}$ is controlled by  the symbol of $\Lambda_{\epsilon, -r} E \Lambda_{\epsilon, r}$ through the symbol of $\Lambda_{\epsilon, -r} G \Lambda_{\epsilon, r}$. Note that the corresponding symbol bounds are uniform in $\epsilon$, because for every $r$ and $\beta$ we have
\[
 |\partial_\zeta^\beta \langle \epsilon \zeta \rangle^r| \le C_{r,\beta}  \langle \epsilon \zeta \rangle^{r-|\beta|},
\]
uniformly in $\epsilon$.
Hence  an inspection of the proof of Lemma \ref{main-lemma} and of the inductive
argument shows that we obtain
\[
 \|\Lambda_{\epsilon, -r} B u\|^2_{L^2} \le C(\|\Lambda_{\epsilon, -r} E u\|^2_{L^2} + \|\Lambda_{\epsilon, -r} GPu\|^2_{L^2} + \|\Lambda_{\epsilon, -r} u\|^2_{H^{-N}}),
\]
with a constant $C$ independent of $\epsilon$. Taking $\epsilon \to 0$, and using the monotone convergence theorem argument mentioned in the last paragraph, finishes the proof of the Theorem.

\noindent\textbf{Acknowledgments.} The authors are  grateful to the
participants of the MATRIX workshop \textit{Hyperbolic Differential
  Equations in Geometry and Physics} for their interest in this topic
and for several lively discussions which were the basis of this
note. Thanks also to Semyon Dyatlov for helpful comments and corrections. DB was supported by NSF CAREER grant
DMS-1654056.  Part of this work was supported by NSF grant DMS-1928930 while DB was in
residence at the Mathematical Sciences Research Institute during Summer
2022.  KD was supported by NSF grant DMS-1708511.

\end{document}